\newtheorem{thm}{Theorem}
\newtheorem{cor}[thm]{Corollary}
\newtheorem{prop}[thm]{Proposition}
\newtheorem{rem}[thm]{Remark}
\newcommand{\N}{\ensuremath{\mathbb{N}}}
\newcommand{\A}{\ensuremath{\mathrm{A}}}
\newcommand{\Oh}{\ensuremath{\mathrm{O}}}
\newcommand{\oh}{\ensuremath{\mathrm{o}}}
\newcommand{\nP}{\ensuremath{\mathcal{P}}}
\newtheorem*{lemA}{Lemma A}
\newtheorem*{lemB}{Lemma B}
\renewcommand{\l}{\ell}
\keywords{Euler's function, totients, distribution}
\subjclass[2010]{11A25 \and 11N64}
\begin{document}

\title{On the distribution of totients $2$ mod. $4$}

\author{Andr\'{e} Contiero}
\address{Departamento de Matem\'atica, ICEx, UFMG Av. Ant\^onio Carlos 6627, 30123-970 Belo Horizonte MG, Brazil}
\email{contiero@ufmg.br\footnote{Partially supported by \emph{Universal CNPq 486468/2013-5 } and by \emph{Programa Institucional de Aux\'ilio \`a Pesquisa de Docentes Rec\'em-Contratados - UFMG}}
}
\author{Davi Lima}
\address{Instituto de Matem\'atica, UFAL. Av. Lourival de Melo Mota, s/n, 57072-900 Macei\'o AL, Brazil}
\email{davimat@impa.br}

\dedicatory{}

\begin{abstract}

In this paper we study the distribution of totients $2 \, \mbox{mod.} 4$.
We prove that the asymptotic magnitude of such totients with multiplicity two is half of that of prime numbers.
As a corollary we obtain that the relative asymptotic density of the number of those totients with multiplicity four 
over the number of totients with multiplicity two is zero. We also obtain that the set of totients with multiplicity $k>1$ which 
have power, bigger than one, of a prime in their pre-images has relative asymptotic density zero over the number of all totients of 
multiplicity $k$. A result on the distribution of consecutive pairs of totients $2\,\mbox{mod.} 4$, which relates to cousin primes, is also provided.


%
%
\end{abstract}

\maketitle

\section{Introduction}

One of the main functions in number theory is the widely known Euler's \textit{totient} $\phi$-function.
A particular subject of study is the set $\mathcal{V}$ of totients, ie. the set of the images taken by Euler's $\phi$-function, 
$$\mathcal{V}:=\{1,2,4,6,8,10,12, 16, 18,\dots\}\,.$$ The distribution
of totients has been investigated for many authors and from many perspectives, it is also
closely related to deep conjectures involving Euler's totient function, in particular, a famous
Carmichael's conjecture \cite{C} which states that there is no $m$ such that $\A(m)=1$. 
Here $\A(m):=\vert \phi^{-1}(m)\vert$ is the multiplicity of $m$.
Given an integer $k>1$, the well known Sierpi\'nski's conjecture, actually Theorem, says that there is a number $m$ such that $\A(m)=k$, 
this was proved by Ford in \cite{F}. 

In a very deep study on the distribution of totients Ford \cite{F1} provided the main results already known 
on distribution of totients, he also summarized some of the main previously known results.
Related to the proportion of totients with multiplicity $k>1$,
Ford proved that for every $\epsilon>0$, $\vert\mathcal{V}_{k}(x)\vert\gg_{\epsilon}\vert\mathcal{V}(x)\vert m^{-1-\epsilon}$,
where $\A(m)=k$, cf. \cite[Thm. 2]{F2}. For a positive integer number $k$,  the set $\mathcal{V}_{k}$ stands for 
totients whose multiplicity is exactly $k$, $$\mathcal{V}_{k}:=\{m\in\mathbb{N};\,\A(m)=k\}.$$ In the same paper, Ford encourages to 
classify totients more finely, as can be noted in last paragraph of page $39$ (\cite[pg. 39]{F2}).

Totients in suitable residue class have particular interest. It was proved by Dence and Pomerance 
\cite[Thm. 1.1]{PD} that if a residue class contains a multiple of 4, then it must contain infinitely many totients, they also got 
an asymptotic formulae for totients in a residue class modulo $12$, see \cite[Thm. 1.2]{PD}.

The set of totients bigger than $1$ are divided in two classes modulo $4$, namely $0$ and $2$.
In this paper we focus on the distribution of totients which are $2$ modulo $4$.
This class of totients has already been studied by Klee in \cite{K} from a different point of view, he did not consider its distribution.
We also note that Mingzhi characterized the nontotients $2$ modulo $4$, cf. \cite[Thm. 2]{Min}.
It is quite simple to prove that totients $2$ modulo $4$ have multiplicity equals to $0$, $2$ or $4$, see our Lemma B below. This led us to
introduce the sets $$\mathcal{T}_{k}:=\{2r\in\mathbb{N};\, r\mbox{ odd and }\ \A(2r)=k\}\ \ \ \ (k=0,2,4).$$
Having in hand a result on a relative asymptotic density of all totients whose pre-images possess a power (bigger than one) of an odd prime number, see Lemma A, we can
prove that $\mathcal{T}_2(x)$ has magnitude $\pi(x)/2$, where $\pi(x)$ is the number of primes numbers
not bigger than $x$, this is our Theorem \ref{thmA}. As an immediate corollary we can show that 
$\vert\mathcal{T}_{4}(x)\vert=\oh(\vert\mathcal{T}_{2}(x)\vert)$, see Corollary \ref{corA}. Taking
the set  $\mathcal{V}_{k}^{l}$ of totients with multiplicity $k$ such that there is a power of an odd prime in the inverse image by $\phi$,
we prove that the limit of $\vert\mathcal{V}_{k}^{l}(x)\vert$ over $\vert\mathcal{V}_{k}(x)\vert$ is equal to zero when $x$ goes to infinity, cf. Proposition \ref{vkl}.
Moreover, Proposition \ref{pairs} concerns a distribution of consecutive pairs of totients $2\,\mbox{mod.} 4$ and relates to cousin primes.

\section{Two Lemmas}

As usual, the set of prime numbers is denoted by $\nP$ and $\pi(x)$ stands for the number
of prime numbers not bigger than $x$. We also use the \textit{big} $\Oh$  and \textit{small} $\oh$
standard notations. The following notation will be useful throughout this paper: given any subset 
$\mathrm{U}$ of the positive integers and $x\in\mathbb{R}$ a real number, 
$\mathrm{U}(x)$ stands for the elements of $\mathrm{U}$
not bigger than $x$, $$\mathrm{U}(x):=\{n\in\mathrm{U};\,n\leq x\}\,,$$ and it is clear that $\vert U(x)\vert$ denotes
the number of elements of the set $U(x)$.

\begin{lemA}\label{lemmaA}
Given an integer number $t>0$, let us consider the set 
$$\mathcal{R}_{t}:=\{k\in\N\,;\,p^i\in \phi^{-1}(k)\mbox{ with } i\ge t+1,\ p\in\nP,\ p>2\}\subset\mathcal{V}\,.$$ We have
$$\vert\mathcal{R}_{t}(x)\vert=\oh(\pi(\sqrt[t]{x}))\,.$$
\end{lemA}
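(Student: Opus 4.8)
We want to bound $|\mathcal{R}_t(x)|$, the number of totients $k \le x$ that admit a preimage of the form $p^i$ with $p$ an odd prime and $i \ge t+1$. The natural approach is to parametrize these totients directly by their "witnessing" prime powers. If $k \in \mathcal{R}_t(x)$, then $k = \phi(p^i) = p^{i-1}(p-1)$ for some odd prime $p$ and some $i \ge t+1$. Since $k \le x$ we get $p^{i-1}(p-1) \le x$, hence in particular $p^i > p^{i-1}(p-1) = k$ is not forced upward, but crucially $p^{i-1} \le x$, so $p^{i-1}(p-1) \le x$ gives $p^{t} \le p^{i} \le$ (roughly) $x$ when $i \ge t+1$; more carefully $p^{i-1}(p-1)\ge p^{i-1}\cdot\tfrac{p}{2} \cdot\tfrac{1}{1} $ is awkward, so instead just use $p^{i-1} \le x$ together with $i - 1 \ge t$ to conclude $p \le x^{1/t}$. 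Thus every element of $\mathcal{R}_t(x)$ arises from a prime $p \le x^{1/t}$. The number of such primes is $\pi(x^{1/t})$, which already gives $|\mathcal{R}_t(x)| = \Oh(\pi(\sqrt[t]{x}))$; the work is to improve the $\Oh$ to $\oh$.

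To get the little-$\oh$ savings, I would argue that the map $k \mapsto p$ (the witnessing prime) is far from surjective onto primes $\le x^{1/t}$: in fact, for a fixed prime $p$, the totient $p^{i-1}(p-1)$ it produces lies in $\mathcal{R}_t(x)$ only when $p^{i-1}(p-1) \le x$ with $i \ge t+1$, i.e. roughly when $p^t(p-1) \le x$, i.e. $p \lesssim x^{1/(t+1)}$. Hence actually all witnessing primes satisfy $p \le (2x)^{1/(t+1)}$ or so — a much smaller range than $x^{1/t}$. Therefore
\[
|\mathcal{R}_t(x)| \le \#\{\text{primes } p : p^t(p-1) \le x\} \cdot (\text{number of admissible } i) ,
\]
and since the number of admissible exponents $i$ with $p^{i-1}(p-1) \le x$ is $\Oh(\log x)$ uniformly, while the prime count is $\le \pi\bigl((2x)^{1/(t+1)}\bigr)$, we get
\[
|\mathcal{R}_t(x)| \ll \pi\bigl(x^{1/(t+1)}\bigr)\log x = \oh\bigl(\pi(x^{1/t})\bigr)
\]
by the prime number theorem (or even just Chebyshev bounds), since $\pi(x^{1/(t+1)})\log x / \pi(x^{1/t}) \asymp x^{1/(t+1) - 1/t}(\log x)^2 \to 0$. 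That is the desired estimate.

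The one subtlety — and the step I expect to require the most care — is the double-counting issue: a single $k \in \mathcal{R}_t(x)$ might be hit by several pairs $(p, i)$, and more importantly I want an \emph{upper} bound on $|\mathcal{R}_t(x)|$, so over-counting is harmless; the genuine point is just to make the bookkeeping of the exponent range clean, i.e. to verify that for each fixed $p$ the number of $i \ge t+1$ with $p^{i-1}(p-1) \le x$ is $\Oh(\log x / \log p) = \Oh(\log x)$, and to handle the edge case $p = 2$ being excluded (which is why we restrict to odd primes, but this only helps). I would also double check the inequality $p^t \le p^{i-1} \le x/(p-1) < x$ carefully to pin down that the witnessing prime truly satisfies $p < x^{1/t}$, and then the refined bound $p^{t+1} < p^t \cdot p \le \tfrac{x}{p-1}\cdot p = \tfrac{px}{p-1} \le 2x$ giving $p < (2x)^{1/(t+1)}$. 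Everything else is a routine application of Chebyshev-type prime bounds, so no deep input is needed beyond elementary estimates.
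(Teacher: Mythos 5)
Your argument is correct and follows essentially the same route as the paper's: both proofs count the witnessing prime powers $p^i$ with $i\ge t+1$ and $\phi(p^i)\le x$, observe that the prime is forced down to roughly $x^{1/(t+1)}$ while the number of admissible exponents is $\Oh(\log x)$, and finish with the Prime Number Theorem (or Chebyshev bounds). Your bookkeeping (primes times exponents) in fact lands cleanly on the stated bound $\oh(\pi(\sqrt[t]{x}))$, whereas the paper's write-up, which retains a spurious $\pi(\sqrt[t]{x})$ term in its bound for $|\mathcal{U}(x)|$, only concludes the weaker $\oh(\sqrt[t]{x})$.
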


\begin{proof}
We start by noting that the function $f:[t,+\infty)\rightarrow \mathbb{R}$, given by $f(t)=q^{t+1}-q^{t}$ 
is increasing when $q>1$. By the very definition, for every $k\in \mathcal{R}_{t}(x)$ there is an odd prime number $q$
and an integer $m\geq t+1$ such that 
$$x \geq k=q^m-q^{m-1}\ge q^{t+1}-q^{t}.$$
From the above inequality we get a upper bound for $q$, namely $q\leq \sqrt[t]{x/2}$. 
Since $x\geq k\geq 2q^{t}$, we get the following upper bound $t\le \lceil \log x/\log 3\rceil$. 
Now, let us take the set $\mathcal{U}(x):=\{q^j\le x; q \ \mbox{is prime}\}$. Noting that the function $k\mapsto q^j\in \phi^{-1}(k)$ 
is injective (by choosing the largest prime $q$, for example), we can see that $\vert\mathcal{R}_{t}(x)\vert\le \vert\mathcal{U}(x)\vert$. Hence
$$|\mathcal{U}(x)|\leq \pi(\sqrt[t]{x})+\sum_{i= t+1}^{\lceil \log x/\log 3\rceil}\pi(\sqrt[i]{x}).$$
Now, by the Prime Number Theorem follows
\begin{equation}\label{1}
\frac{\pi(\sqrt[i]{x})}{\sqrt[t]{x}}=\Oh\left(\frac{t}{\sqrt[t(t+1)]{x} \log x}\right), \  \forall i>t\,.
\end{equation}
In fact,
\begin{equation*}
\frac{\pi(\sqrt[i]{x})}{\sqrt[t]{x}}\sim \frac{i\sqrt[i]{x}}{\sqrt[t]{x}\log x}
\end{equation*}
and since $\dfrac{1}{t}-\dfrac{1}{i}>\dfrac{1}{t(t+1)}$ follows the equation (\ref{1}).
Hence we can write
\begin{equation*}\label{Prop1}
\sum_{i=t+1}^{\lceil \log x/\log 3\rceil}\pi(\sqrt[i]{x})=
\Oh\left(\sum_{i=t+1}^{\lceil \log x/\log 3\rceil} \frac{i}{\sqrt[t(t+1)]{x}\log x}\right)=
\Oh\left(\frac{\log x}{\sqrt[t(t+1)]{x}}\right). 
\end{equation*}
Once again, the Prime Number Theorem ensures that $\pi(\sqrt[t]{x})=\oh(\sqrt[t]{x})$, and by the above equation 
we conclude $|\mathcal{U}(x)|=\oh(\sqrt[t]{x})$. Finally
$$\lim_{x\to \infty}\frac{\vert\mathcal{R}_{t}(x)\vert}{\sqrt[t]{x}}=0\,,$$ which concludes the proof. 
\end{proof}


The next lemma was shown also by Klee in \cite{K}, here we give a different proof. Additionally, 
the importance of the next lemma for the present paper also justify to include our proof.

\begin{lemB}\label{lemmaB}
It follows that $\A(2r)\in\{0,2,4\}$, when $2r\equiv2\mod 4$.
If $\A(2r)=2$, then $\phi^{-1}(2r)=\{p^{n}, 2p^{n}\}$, with $p$ an odd prime,
$n>0$. If $\A(2r)=4$, then $2r+1$ is a prime number and 
$\phi^{-1}(2r)=\{2r+1, q^{m}, 4r+2, 2q^{m}\}$ with $q$ a prime number and $m>1$.
\end{lemB}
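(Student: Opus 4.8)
The plan is to analyze which integers $n$ can have $\phi(n) \equiv 2 \pmod 4$ by inspecting the factorization $\phi(n) = \prod_{p^a \| n} p^{a-1}(p-1)$ and tracking the exact power of $2$ dividing it. First I would recall that $v_2(\phi(n))$ (the $2$-adic valuation) receives a contribution of $v_2(p-1) \ge 1$ from every odd prime $p \mid n$, plus a contribution from the power of $2$ dividing $n$ (namely $v_2(\phi(2^b)) = b-1$ for $b \ge 2$, and $0$ for $b \in \{0,1\}$). Since $\phi(n) \equiv 2 \pmod 4$ means $v_2(\phi(n)) = 1$, this forces $n$ to have at most one odd prime factor $p$, that prime must satisfy $p \equiv 3 \pmod 4$ (so $v_2(p-1)=1$), and the power of $2$ dividing $n$ is at most $2^2 = 4$ — more precisely $n = p^a$, $2p^a$, or $4p^a$ for a single odd prime $p \equiv 3 \pmod 4$, with the subtlety that $n = 4$ gives $\phi(4) = 2$ as well, and $n=3,4,6$ are the small cases. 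Conversely each such $n$ does give a totient $\equiv 2 \pmod 4$.

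Next I would fix $2r$ with $2r \equiv 2 \pmod 4$ and enumerate $\phi^{-1}(2r)$ using the above. If $p^a$ is a solution with $p$ odd (necessarily $p \equiv 3 \pmod 4$ and $\phi(p^a) = p^{a-1}(p-1) = 2r$), then $2p^a$ is automatically another solution since $\phi(2p^a) = \phi(p^a) = 2r$; conversely $\phi(4 p^a) = 2\phi(p^a) = 4r \ne 2r$, so $4p^a$ is never in $\phi^{-1}(2r)$. The remaining possibility is a solution of the form $2^b$ with $b \ge 2$: this needs $\phi(2^b) = 2^{b-1} = 2r$, which forces $r = 1$, i.e. $2r = 2$, and then $\phi^{-1}(2) = \{3,4,6\}$ has size $3$ — but $2 \equiv 2 \pmod 4$, so I should either treat $2r=2$ as an exceptional small case or note $\A(2)=3\notin\{0,2,4\}$ and restrict to $r>1$ (the paper's later results concern the asymptotic regime, so this is harmless; I would state the lemma for $r$ with, say, $2r>2$, or simply remark that $2r=2$ is the lone exception). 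For $2r > 2$, the only candidate prime powers are of the form $p^a$, and the number of odd primes $p \equiv 3\pmod 4$ with $p^{a-1}(p-1) = 2r$ for some $a\ge 1$ is either $0$, $1$, or $2$: it can be $2$ precisely when both $2r+1$ is prime (giving the solution $p = 2r+1$, $a=1$) and $2r = q^{m-1}(q-1)$ has a solution with $m \ge 2$ (giving a prime power $q^m$, $m>1$). Pairing each such $p^a$ with its companion $2p^a$ doubles the count, yielding $\A(2r) \in \{0,2,4\}$, with $\A(2r)=2 \iff \phi^{-1}(2r) = \{p^n, 2p^n\}$ and $\A(2r)=4 \iff \phi^{-1}(2r) = \{2r+1,\, q^m,\, 4r+2,\, 2q^m\}$ with $2r+1$ prime, $q$ prime, $m>1$, exactly as stated.

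The main obstacle is the bookkeeping in the $\A(2r)=4$ case: one must argue that there cannot be \emph{two distinct} prime-power solutions of the ``shifted'' type $p^a$ with $a \ge 2$, nor a solution $p$ with $a=1$ coexisting with two solutions of higher exponent, so that the four-element description is both exhaustive and non-redundant. This is where I would use a divisibility argument: if $p^{a-1}(p-1) = q^{b-1}(q-1) = 2r$ with $a, b \ge 2$ and $p \ne q$, then $p \mid 2r$ and $q \mid 2r$ force $p^{a-1} q^{b-1} \mid 2r = p^{a-1}(p-1)$, so $q^{b-1} \mid p-1$, and symmetrically $p^{a-1}\mid q-1$, which combined with $p,q$ odd gives a contradiction on sizes unless one of the exponents is $1$. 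Handling $a=b=2$ needs a touch more care ($p \mid q-1$ and $q\mid p-1$ is already impossible for odd primes). Once uniqueness of the ``higher-exponent'' solution is pinned down, the fact that $4r+2 = 2(2r+1)$ and $2q^m$ are forced, and that $\phi(4r+2) = \phi(2r+1) = 2r$ when $2r+1$ is prime while $\phi(2q^m) = \phi(q^m) = 2r$, completes the identification of the four preimages.
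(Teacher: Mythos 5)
Your proposal is correct and follows essentially the same route as the paper: reduce preimages to the forms $p^{k}$ and $2p^{k}$ by tracking the power of $2$ in $\phi(n)$, then use the divisibility argument ($p^{a-1}\mid q-1$ and $q^{b-1}\mid p-1$ forcing a size contradiction) to show there is at most one odd-prime-power preimage with exponent $1$ and at most one with exponent $>1$, each paired with its double. You are in fact slightly more careful than the paper, whose proof silently ignores the exceptional value $2r=2$, where $\phi^{-1}(2)=\{3,4,6\}$ and $\A(2)=3\notin\{0,2,4\}$.
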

\begin{proof} 
Let $x$ be a positive integer such that $\phi(x)=2r$ and let $S_x$ be the set of odd prime factors of $x$.
If there are prime numbers $p,q>2$ such that  $p,q\in S_x$, then $2r\equiv 0\mod 4$, which is a contradiction. Thus $x=p^k$ or $x=2p^k$.
Note that $x=2p^k$ is a solution of $\phi(x)=2r$ iff $\phi(x/2)=2r$. Then, we can suppose that $x=p^k,y=q^m$ and $z=t^l$ are solutions of 
          \begin{equation}\label{F-lemma}
          \phi(w)=2r.
          \end{equation}
Assuming that $k,m>1$, from (\ref{F-lemma}) we get
         \begin{equation*}
         p^{k-1}(p-1)=q^{m-1}(q-1)\,.
        \end{equation*}
Since $k,m>1$, $p\vert\, q-1$ and $q\vert\,p-1$, which is a contradiction.
Then we can assume that $k=1$ and therefore $m>1$. The above argument immediately implies that $l=1$. It follows from (\ref{F-lemma})
that $p-1=t-1$. Thus $\A(2r)\in \{0,2,4\}$. Moreover, if $\A(2r)=4$ and $p-1=2r$ implies $p=2r+1$, ie. $2r+1$ is a
prime number. \end{proof}

The following naive remark is addressed to Carmichael's conjecture. It can be taken as another motivation of studying 
the distribution of totients $2$ mod. $4$.

\begin{rem}\label{remark}
Let $m=2^k\cdot r$ be any even positive integer with $r$ odd. If $\A(2r)=4$ then $\A(m)\ge 2$. 
In fact, 
$$2^k \cdot r=2^{k-1} \cdot 2r=\phi(2^k)\phi(x),$$
where $x\in \{p,q^t,2p,2q^t\}$ and $t>1$.
Taking $x=p$ and $x=q^t$ we have that $\phi(x)=m$.
\end{rem}



\section{On the distribution}

In this section we study the distribution of totients $2$ modulo $4$. 
Let us start by taking the following useful sets
$$
\mathcal{T}_{k}=\{2r\,;\, r  \ \mbox{odd and} \ \A(2r)=i\}\ \ \ (k=0,2,4).
$$

\begin{center}
	\begin{table}[H]
		\caption{The number of totients  $2$ mod $4$ $\leq x$ with a fixed multiplicity}
		\begin{tabular}{|c|c|c|c|c|}\hline
			$x$ & $\pi(x)$ & $\vert\mathcal{T}_{2}(x)\vert$ & $\vert\mathcal{T}_{4}(x)\vert$ & $\vert\mathcal{T}_{2}(x)\vert/\pi(x)$ \\ \hline
			$10^3+2$ & $168$ & $87$ & $5$ & $0.517857\dots$ \\ \hline
			$10^4+2$ & $1229$ & $625$ & $8$ & $0.508543\dots$ \\ \hline
			$10^5+2$ & $9592$ & $4831$ & $14$ & $0.503648\dots$ \\ \hline
			$10^6+2$ & $78498$ & $39400$ & $20$  & $0.501923\dots$ \\ \hline 
			$10^7+2$ & $664579$ & $332606$ & $34$ & $0.500476\dots$  \\ \hline
			$10^8+2$ & $5761455$ & $2881495$ & $78$  & $0.500133\dots$ \\ \hline
		\end{tabular}
	\end{table}
\end{center}


\begin{cor}
$\lim_{x\to \infty}\dfrac{\vert\mathcal{T}_{4}(x)\vert}{\sqrt{x}}=0.$
\end{cor}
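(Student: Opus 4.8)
The plan is to use Lemma B to replace the problem of counting $\mathcal{T}_{4}(x)$ by that of counting pairs $(q,m)$ with $q$ an odd prime, $m\geq 2$ and $\phi(q^{m})\leq x$, and then to estimate the latter by the elementary sieve-type argument already used in the proof of Lemma A.

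First I would extract from Lemma B the structural fact that for every $2r\in\mathcal{T}_{4}$ one has $\phi^{-1}(2r)=\{2r+1,\,q^{m},\,4r+2,\,2q^{m}\}$ with $q$ prime and $m>1$, where in addition $q$ is necessarily odd: indeed $q=2$ would give $2r=\phi(2^{m})=2^{m-1}$, which is $\equiv 2\pmod 4$ only when $m=2$, forcing $2r=2$, whereas $\phi^{-1}(2)=\{3,4,6\}$ has only three elements. Thus $2r=\phi(q^{m})=q^{m-1}(q-1)$ and, again by Lemma B, the pair $(q,m)$ is uniquely determined by $2r$. Hence the map $2r\mapsto(q,m)$ is injective on $\mathcal{T}_{4}$ and
\[
|\mathcal{T}_{4}(x)|\ \leq\ \#\bigl\{(q,m)\ :\ q\in\nP,\ q>2,\ m\geq 2,\ q^{m-1}(q-1)\leq x\bigr\}.
\]

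Next I would bound this count by splitting on the value of $m$. Since $q-1\geq q/2$, the inequality $q^{m-1}(q-1)\leq x$ forces $q^{m}\leq 2x$; therefore for each fixed $m$ there are at most $\pi\bigl((2x)^{1/m}\bigr)$ admissible primes $q$, and only the values $2\leq m\leq \lceil\log(2x)/\log 3\rceil$ can occur. This gives
\[
|\mathcal{T}_{4}(x)|\ \leq\ \pi\bigl(\sqrt{2x}\bigr)\ +\ \sum_{m=3}^{\lceil\log(2x)/\log 3\rceil}\pi\bigl((2x)^{1/m}\bigr).
\]
For the tail sum one may either bound it directly by $\Oh\bigl(x^{1/3}\log x\bigr)$, or observe that each such $2r$ lies in $\mathcal{R}_{2}$ (it has the odd prime power $q^{m}$ with $m\geq 3$ in its $\phi$-preimage) and apply Lemma A, which yields $|\mathcal{R}_{2}(x)|=\oh(\pi(\sqrt{x}))=\oh(\sqrt{x})$; in either case the tail is $\oh(\sqrt{x})$. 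For the main term, the Prime Number Theorem gives $\pi(\sqrt{2x})\sim 2\sqrt{2x}/\log(2x)=\oh(\sqrt{x})$. Combining, $|\mathcal{T}_{4}(x)|=\oh(\sqrt{x})$, which is the assertion.

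The only point that needs a little attention is the term coming from $m=2$: the crude bound ``$q\leq\sqrt{2x}$, hence at most $\sqrt{2x}$ choices'' is not sufficient, and one genuinely uses the logarithmic saving of the Prime Number Theorem; indeed the true order of $|\mathcal{T}_{4}(x)|$ is about $\sqrt{x}/\log x$, in agreement with the numerical table. Note also that $\mathcal{T}_{4}\subseteq\mathcal{R}_{1}$, but Lemma A applied with $t=1$ gives only $\oh(\pi(x))$, which is too weak, so the refinement above is essential; apart from that there is no real obstacle, the whole argument being a localized version of the proof of Lemma A combined with the rigidity supplied by Lemma B.
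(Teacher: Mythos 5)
Your proof is correct, but it is not the paper's proof: the paper disposes of this corollary in one line, by asserting the containment $\mathcal{T}_{4}(x)\subset\mathcal{R}_{2}(x)$ and invoking Lemma A with $t=2$. Your instinct to avoid that shortcut is well founded, because the containment is actually false as stated: Lemma B only guarantees a prime power $q^{m}$ with $m>1$ in the preimage, i.e.\ $m\geq 2$, whereas membership in $\mathcal{R}_{2}$ requires an exponent $i\geq t+1=3$. The case $m=2$ genuinely occurs --- for instance $6=\phi(3^{2})$ has $\phi^{-1}(6)=\{7,9,14,18\}$, so $6\in\mathcal{T}_{4}$ but $6\notin\mathcal{R}_{2}$ --- and this is exactly the term you isolate and handle separately. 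Your route (inject $2r\mapsto(q,m)$ via the rigidity of Lemma B, bound the $m=2$ contribution by $\pi(\sqrt{2x})=\oh(\sqrt{x})$ using the Prime Number Theorem, and absorb the $m\geq 3$ tail either directly or via Lemma A) is therefore not merely an alternative: it supplies the argument needed to make the corollary's proof complete, at the cost of redoing locally the dyadic-in-$m$ splitting that Lemma A performs in general. What the paper's approach buys, when the containment does hold, is brevity; what yours buys is correctness of the $m=2$ term, which is in fact the dominant one (of order $\sqrt{x}/\log x$, consistent with the table). One small remark: the uniqueness of the pair $(q,m)$ with $m\geq 2$ in $\phi^{-1}(2r)$ is immediate from the list in Lemma B, so your injectivity claim is fine, and your exclusion of $q=2$ is a detail the paper never addresses but that your argument handles cleanly.
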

\begin{proof}
We just have to observe that $\mathcal{T}_{4}(x)\subset \mathcal{R}_{2}(x)$.
\end{proof}

The following two propositions are consequences of lemmas A and B.

\begin{prop}\label{vkl}
	Let $\l>1$ be a positive integer. Let us consider $\mathcal{V}_{k}^{\l}:=\mathcal{T}_t\cap\mathcal{V}_k$ be the set of totients with multiplicity $k$
	such that there is a power of a prime $p^{\l}$ in its inverse image by $\phi$. With this
	$$\lim_{x\rightarrow\infty} \dfrac{\vert\mathcal{V}_{k}^{\l}(x)\vert}{\vert\mathcal{V}_k(x)\vert}=0$$
\end{prop}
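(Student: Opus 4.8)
The plan is to bound the numerator $\vert\mathcal{V}_k^{\l}(x)\vert$ from above, via Lemma~A, by a function that is $\oh(x/\log x)$, and to bound the denominator $\vert\mathcal{V}_k(x)\vert$ from below by a quantity of order $x/\log x$; the quotient then tends to $0$. I will take $\l>1$ fixed and $k\ge 2$ fixed, so that $\mathcal{V}_k\neq\emptyset$ by \cite{F} and the statement is meaningful.

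First I would observe the containment
\[
\mathcal{V}_k^{\l}(x)\ \subseteq\ \mathcal{R}_{\l-1}(x)\cup\{2^{\l-1}\}.
\]
Indeed, if $n\le x$ satisfies $\A(n)=k$ and $p^{\l}\in\phi^{-1}(n)$ for some prime $p$, then either $p=2$, forcing $n=\phi(2^{\l})=2^{\l-1}$, or $p$ is odd, in which case $p^{\l}\in\phi^{-1}(n)$ with exponent $\l\ge(\l-1)+1$ puts $n$ in $\mathcal{R}_{\l-1}$. Since $\l>1$, Lemma~A applies with $t=\l-1$ and gives $\vert\mathcal{R}_{\l-1}(x)\vert=\oh(\pi(\sqrt[\l-1]{x}))$; using $\pi(\sqrt[\l-1]{x})\le\pi(x)$ and the Prime Number Theorem, this already yields
\[
\vert\mathcal{V}_k^{\l}(x)\vert\ \le\ \vert\mathcal{R}_{\l-1}(x)\vert+1\ =\ \oh(\pi(x))\ =\ \oh\!\left(\frac{x}{\log x}\right).
\]
(One may also avoid Lemma~A: an $n\le x$ with $p^{\l}\in\phi^{-1}(n)$ and $p$ odd is of the shape $n=p^{\l-1}(p-1)\ge p^{\l}/2$, so there are at most $\pi((2x)^{1/\l})+1\ll x^{1/\l}/\log x$ of them; but the weaker estimate above already suffices.)

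Next I would bound the denominator from below using the two results of Ford recalled in the Introduction. Letting $m_k$ denote the least element of $\mathcal{V}_k$, an application of \cite[Thm.~2]{F2} with $\epsilon=1$ gives $\vert\mathcal{V}_k(x)\vert\gg\vert\mathcal{V}(x)\vert\,m_k^{-2}\gg_k\vert\mathcal{V}(x)\vert$, since $m_k$ depends only on $k$; combined with the trivial bound $\vert\mathcal{V}(x)\vert\ge\vert\{\,p-1:p\le x\text{ prime}\,\}\vert=\pi(x)\gg x/\log x$ (the numbers $p-1$ being pairwise distinct totients), this yields $\vert\mathcal{V}_k(x)\vert\gg_k x/\log x$. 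Dividing the two displays, $\vert\mathcal{V}_k^{\l}(x)\vert/\vert\mathcal{V}_k(x)\vert$ is $\oh(x/\log x)$ divided by a quantity that is $\gg_k x/\log x$, hence $\oh_k(1)$, and so tends to $0$ as $x\to\infty$. For the special value $k=2$ one can bypass \cite[Thm.~2]{F2} altogether, since $\mathcal{T}_2\subseteq\mathcal{V}_2$ and the asymptotic $\vert\mathcal{T}_2(x)\vert\sim\pi(x)/2$ proved in this paper already gives $\vert\mathcal{V}_2(x)\vert\gg x/\log x$.

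The only genuinely non-elementary ingredient is the lower bound $\vert\mathcal{V}_k(x)\vert\gg_k x/\log x$ for arbitrary $k$, which rests on Ford's deep Theorem~2 of \cite{F2}; everything else is routine bookkeeping with Lemma~A and the Prime Number Theorem, and it is precisely the little-$\oh$ in Lemma~A (rather than any power saving) that makes the borderline case $\l=2$ go through.
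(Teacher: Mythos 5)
Your proof is correct and follows essentially the same route as the paper's: bound $\vert\mathcal{V}_k^{\l}(x)\vert$ via Lemma~A by something that is $\oh(\pi(x))$, and bound $\vert\mathcal{V}_k(x)\vert$ from below by $\gg_k \pi(x)$ using Ford's Theorem~2 together with $\vert\mathcal{V}(x)\vert\ge\pi(x)$. You are in fact slightly more careful than the paper at two points --- you apply Lemma~A with the correct index $t=\l-1$ (the paper writes $\oh(\sqrt[\l]{x})$ where Lemma~A literally yields $\oh(\pi(\sqrt[\l-1]{x}))$) and you dispose of the case $p=2$, which Lemma~A excludes --- but neither affects the argument, so the two proofs coincide in substance.
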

\begin{proof}
	From Lemma A we get $\vert\mathcal{V}_k^l(x)\vert=\oh(\sqrt[l]{x})$. By the Prime Number Theorem we have $\sqrt[l]{x}=\oh(\pi(x))$.
	Now, \cite[Thm. 2]{F1} implies that $\pi(x)=\Oh(\vert\mathcal{V}_k(x)\vert)$. Hence $\vert \mathcal{V}_k^l(x)\vert=\oh(\vert \mathcal{V}_k(x)\vert)$.
\end{proof}

The next Proposition is related to cousin primes, i.e., prime numbers $p$ such that $p+4$ is also a prime number.
\begin{prop}\label{pairs}
	Let $\mathcal{C}(x):=\{2r\leq x-4; 2r,2r+4\in\phi(\mathbb{N})\, \mbox{and} \ r \ \mbox{is odd}\}$ be the trunked set of
	the pairs of consecutive totients $2r$ are $2\, \mbox{mod.} 4$, $$\lim_{x\rightarrow\infty}\dfrac{\vert\mathcal{C}(x)\vert}{\sqrt{x}}=0\,.$$
	In particular,
	$$\lim_{x\rightarrow\infty}\dfrac{\vert\mathcal{C}(x)\vert}{\vert\mathcal{V}_k(x)\vert}=0, \ \forall k\ge 2.$$
\end{prop}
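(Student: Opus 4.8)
The plan is to bound $\mathcal{C}(x)$ by reducing each pair $2r, 2r+4$ of consecutive totients (with $r$ odd) to a statement about primes or prime powers, and then count. First I would use Lemma B: since $2r \equiv 2 \pmod 4$ is a totient, it has multiplicity $2$ or $4$, so its preimage contains either a term of the form $p^n$ with $p$ an odd prime and $n>0$, or (in the multiplicity-$4$ case) $2r+1$ is prime. The same dichotomy applies to $2r+4$. So I would split $\mathcal{C}(x)$ according to which of the four combinations occurs for the pair $(2r, 2r+4)$.

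Next I would handle the combinations. If $2r$ has multiplicity $4$ and $2r+4$ has multiplicity $4$, then both $2r$ and $2r+4$ lie in $\mathcal{T}_4(x)$, so that part is $\Oh(|\mathcal{T}_4(x)|) = \oh(\sqrt{x})$ by the Corollary. If either $2r$ or $2r+4$ has multiplicity $4$, it already contributes to $\mathcal{T}_4(x)$, so the only remaining case to worry about is when \emph{both} $2r$ and $2r+4$ have multiplicity $2$. Here, by Lemma B, $2r = \phi(p^n) = p^{n-1}(p-1)$ and $2r+4 = \phi(q^m) = q^{m-1}(q-1)$ for odd primes $p,q$ and exponents $n,m \ge 1$. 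I would further split on whether $n=1$ or $n\ge 2$ (and similarly for $m$). If $n \ge 2$, then $2r \in \mathcal{R}_1(x)$ and by Lemma A (with $t=1$) there are only $\oh(\pi(x)) = \oh(\sqrt{x})$... actually more carefully $|\mathcal{R}_1(x)| = \oh(\pi(x))$, which is not obviously $\oh(\sqrt x)$; instead I should observe directly that the number of true prime powers $p^n \le x$ with $n \ge 2$ that are themselves of the shape $\phi^{-1}$-values is $\Oh(\sqrt{x})$ by the trivial prime-power count, and in fact $\oh(\sqrt x)$ is not needed — what I actually need is that this contributes $\oh(\sqrt x)$, which follows because the relevant count of $r$ with $p^{n-1}(p-1)=2r$, $n\ge2$, $p^n\le x+?$ is $\Oh(x^{1/2})$ from $p^2 \le 2r+1 \le x$, and then one wins a $\log$; but cleanly: $\mathcal{T}_2 \cap \mathcal{R}_1$ has size $\oh(\sqrt x)$ by combining the reasoning in Lemma A's proof. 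So the genuinely hard leftover case is $n = m = 1$, i.e. $2r = p - 1$ and $2r+4 = q-1$ with $p, q$ both prime: equivalently $p$ and $p+4$ are both prime, i.e. $p$ is a cousin prime, and $r = (p-1)/2$.

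So the crux reduces to: the number of cousin primes $p \le x$ is $\oh(\sqrt{x})$. This I would establish via the Brun sieve (Brun's method / the Selberg sieve), exactly as in the classical bound on twin primes: the number of $p \le x$ with $p+4$ also prime is $\Oh\!\left(\dfrac{x}{(\log x)^2}\right)$, which is certainly $\oh(\sqrt x)$. In fact even the cruder sieve bound $\Oh(x/(\log x)^2)$ from Brun's sieve suffices, so I would just cite the standard result (Brun's theorem for cousin primes, a direct analogue of the twin prime case). Assembling the pieces: $|\mathcal{C}(x)| \le |\mathcal{T}_4(x)| + |\mathcal{T}_2(x)\cap\mathcal{R}_1(x)| + \#\{\text{cousin primes} \le x\} = \oh(\sqrt x) + \oh(\sqrt x) + \Oh(x/(\log x)^2) = \oh(\sqrt x)$, which gives the first limit. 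For the ``in particular'' clause, Ford's theorem \cite[Thm. 2]{F1} gives $\pi(x) = \Oh(|\mathcal{V}_k(x)|)$ for each $k \ge 2$, and by the Prime Number Theorem $\sqrt x = \oh(\pi(x))$, hence $|\mathcal{C}(x)| = \oh(\sqrt x) = \oh(|\mathcal{V}_k(x)|)$.

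The main obstacle is making the cousin-prime count rigorous: one must invoke a sieve upper bound (Brun or Selberg) for primes $p$ with $p+4$ prime, since no unconditional asymptotic for cousin primes is known — but only an upper bound of the form $\Oh(x/(\log x)^2)$ is needed, which is classical. The secondary technical point is confirming that the prime-power cases ($n \ge 2$ or $m \ge 2$) contribute $\oh(\sqrt x)$; this follows from the same estimates used in the proof of Lemma A, essentially because genuine prime powers are sparse. Everything else is bookkeeping over the finitely many cases produced by Lemma B.
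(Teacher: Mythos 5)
Your reduction is essentially the one the paper uses: the paper writes $\mathcal{C}(x)=\mathcal{C}_1(x)\cup\mathcal{C}_2(x)$, where $\mathcal{C}_1(x)$ collects the pairs with $2r+1$ and $2r+5$ both prime (cousin primes) and $\mathcal{C}_2(x)$ collects the pairs for which some preimage is a proper prime power, and it disposes of $\mathcal{C}_2$ by the prime-power count behind Lemma A. Your handling of the multiplicity-$4$ and prime-power cases matches this and is sound (proper prime powers $p^n\le 2x$, $n\ge 2$, number $\Oh(\sqrt{x}/\log x)$). The problem is the step you yourself identify as the crux.

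You claim that the Brun/Selberg upper bound $\Oh\bigl(x/(\log x)^2\bigr)$ for cousin primes ``is certainly $\oh(\sqrt{x})$''. It is not: $\dfrac{x/(\log x)^2}{\sqrt{x}}=\dfrac{\sqrt{x}}{(\log x)^2}\to\infty$, so $x/(\log x)^2$ grows far \emph{faster} than $\sqrt{x}$, and no sieve bound of this shape can close the gap. Worse, the statement you actually need — that the number of cousin prime pairs up to $x$ is $\oh(\sqrt{x})$ — is not known and is expected to be false: the Hardy--Littlewood conjecture predicts $\asymp x/(\log x)^2$ such pairs, and every prime pair $(p,p+4)$ with $p\equiv 3\ (\mathrm{mod}\ 4)$ contributes the element $2r=p-1$ to $\mathcal{C}(x)$, so conjecturally $\vert\mathcal{C}(x)\vert\gg x/(\log x)^2\gg\sqrt{x}$. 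For comparison, the paper's own proof simply asserts $\vert\mathcal{C}_1(x)\vert=\oh(\sqrt{x})$ for exactly this cousin-prime set with no justification; you have correctly isolated where the difficulty sits, but your proposed fix cannot deliver the required bound, and no known estimate can.
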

\begin{proof}
By considering the sets $\mathcal{C}_1(x)=\{2r; 2r+1,2r+5\in \mathcal{P}(x+1)\}$ and 
$\mathcal{C}_2(x)=\{2r\le x-4; p^t\in \phi^{-1}(2r)\cup \phi^{-1}(2r+4),t>1\}$, we get
$$\mathcal{C}(x)=\mathcal{C}_1(x)\cup \mathcal{C}_2(x).$$
Note that $\mathcal{C}_1(x)$ is the set of $p-1=2r$ such that $p,p+4$ are cousin primes smaller than or equal to $x+1$ and therefore $\vert\mathcal{C}_1(x)\vert=\oh(\sqrt{x})$.
From lemma A follows that $|\mathcal{C}_2(x)|=\oh(\sqrt{x})$, just 
because $\mathcal{C}_2(x)$ is the union of subsets of length equals to $\oh(\sqrt{x})$. Then $\vert\mathcal{C}(x)\vert=\oh(\sqrt{x})$. 
In particular, since $\sqrt{x}=\oh(\pi(x))$ and $\pi(x)=\Oh(\vert\mathcal{V}(x)\vert)$ we have that 
$$\lim_{x\to \infty}\frac{\vert \mathcal{C}(x)\vert}{\vert\mathcal{V}_k(x)\vert}=0.$$
\end{proof}

\begin{rem}
Evidently, we can change $2r+4$ by $2r+d$, where $d$ is even and we get the same result.
\end{rem}
We denote $\mathcal{P}(k,j)=\{p\in \mathcal{P};p\equiv j \, (\mbox{mod.}k)\}$ and $\pi(x;k,j)=|\mathcal{P}(x;k,j)|$. 
It is obvious that $p\in \phi^{-1}(2r)$ with $r\equiv 1(\mbox{mod.}2)$ implies $p\in \mathcal{P}(4,3).$

\begin{thm}\label{thmA}
$$\vert\mathcal{T}_2(x)\vert\sim \frac{\pi(x)}{2}.$$
\end{thm}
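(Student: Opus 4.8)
The plan is to classify the elements of $\mathcal{T}_2$ according to the shape of the odd prime power $p^n$ guaranteed by Lemma B, and to show that the dominant contribution comes from the case $n=1$, which is in bijection with the primes $p\equiv 3\pmod 4$. First, recall from Lemma B that if $2r\equiv 2\pmod 4$ and $\A(2r)=2$, then $\phi^{-1}(2r)=\{p^n,2p^n\}$ for an odd prime $p$ and $n\ge 1$. Split $\mathcal{T}_2$ into $\mathcal{T}_2'=\{2r\in\mathcal{T}_2 : n=1\}$ and $\mathcal{T}_2''=\{2r\in\mathcal{T}_2 : n\ge 2\}$. For the second piece, note $\mathcal{T}_2''\subseteq\mathcal{R}_1$, so Lemma A (with $t=1$) gives $|\mathcal{T}_2''(x)|=\oh(\pi(x))$, which is negligible compared to the claimed main term $\pi(x)/2$.

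Next I would pin down $\mathcal{T}_2'$ exactly. If $2r\in\mathcal{T}_2'$ then $\phi^{-1}(2r)=\{p,2p\}$ with $p$ an odd prime and $2r=\phi(p)=p-1$; conversely, given an odd prime $p$ with $2r:=p-1\equiv 2\pmod 4$ (equivalently $p\equiv 3\pmod 4$), we have $p,2p\in\phi^{-1}(p-1)$, and by Lemma B the only way this could fail to give $\A(p-1)=2$ is if $\A(p-1)=4$, i.e.\ if additionally $p-1=q^m-q^{m-1}$ for some prime $q$ and $m>1$. So the map $p\mapsto p-1$ is a bijection from $\{p\in\mathcal{P}(4,3)\}$ minus an exceptional set onto $\mathcal{T}_2'$, and the exceptional set is contained in $\{p : p-1\in\mathcal{R}_1\}$, hence of size $\oh(\pi(x))$ by Lemma A again. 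Therefore
$$
|\mathcal{T}_2(x)| = |\mathcal{T}_2'(x)| + \oh(\pi(x)) = \pi(x;4,3) + \oh(\pi(x)).
$$
Finally, by the Prime Number Theorem for arithmetic progressions (Dirichlet / de la Vall\'ee Poussin), $\pi(x;4,3)\sim \tfrac12\,\pi(x)$, and since $\oh(\pi(x))$ is absorbed, we conclude $|\mathcal{T}_2(x)|\sim \pi(x)/2$.

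The only genuinely substantive input is the equidistribution of primes between the two residue classes mod $4$, which is classical; everything else is bookkeeping on top of Lemmas A and B. The step most likely to need care is the converse direction in identifying $\mathcal{T}_2'$: one must be sure that ``$p\equiv 3\pmod 4$'' already forces $\phi^{-1}(p-1)$ to be exactly $\{p,2p\}$ except on a sparse set, i.e.\ that no second prime power $q^m$ with $m>1$ (nor a second prime $q$ with $q-1=p-1$, which is impossible) sneaks in — and this is exactly what Lemma B constrains and Lemma A shows to be rare. A minor point is checking that the $\oh(\pi(x))$ error terms coming from Lemma A (which are phrased as $\oh(\pi(\sqrt[t]{x}))\subseteq\oh(\sqrt{x})$ for $t=1$) are indeed $\oh(\pi(x))$, which follows since $\sqrt{x}=\oh(\pi(x))$ by the Prime Number Theorem.
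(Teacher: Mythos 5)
Your proposal is correct and follows essentially the same route as the paper: split $\mathcal{T}_2$ according to whether the preimage from Lemma B contains a prime or a higher prime power, dispose of the latter (and of the primes $p\equiv 3\pmod 4$ with $\A(p-1)=4$) via Lemma A, and extract the main term $\pi(x;4,3)\sim\pi(x)/2$ from the Prime Number Theorem in arithmetic progressions. Your write-up is in fact a bit cleaner in its bookkeeping than the paper's, but the underlying argument is the same.
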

\begin{proof}
Let us consider the sets $\mathcal{T}'_2(x)=\{2r\le x; 2r+1\in \mathcal{P} \ \mbox{and}
\ \A(2r)=2\}$ and $\mathcal{T}'_2(x)=\{2r\in \mathcal{T}_2(x); \, r \ \mbox{odd}\}$. It follows from Lemma B that
$$\mathcal{T}_2(x)=\mathcal{T}'_2(x)\cup \mathcal{T}'_2(x).$$      
Therefore,
$$\frac{\vert \mathcal{T}_2(x)\vert}{\pi(x)}=\frac{\vert \mathcal{T}'_2(x)\vert}{\pi(x)}+ \frac{\vert\mathcal{T}'_2(x) \vert}{\pi(x)}.$$
Moreover, from Lemma A we have that $\vert\mathcal{T}'_2(x)\vert =\oh(\sqrt{x})$. Since $\sqrt{x}=\oh(\pi(x))$, 
the Prime Number Theorem implies that $\vert\mathcal{T}'_2(x)\vert =\oh(\pi(x))$. Hence
\begin{equation}
  \liminf_{x\to \infty}\frac{\vert \mathcal{T}_2(x)\vert}{\pi(x)}=\liminf_{x\to \infty}\frac{\vert \mathcal{T}'_2(x)\vert}{\pi(x)} \ \mbox{and} 
  \ \limsup_{x\to \infty}\frac{\vert \mathcal{T}_2(x)\vert}{\pi(x)}=\limsup_{x\to \infty}\frac{\vert \mathcal{T}'_2(x)\vert}{\pi(x)}.
\end{equation}  
\textbf{Claim:}$$\lim_{x\to \infty}\frac{\vert \mathcal{T}'_2(x)\vert}{\pi(x)}=\frac{1}{2}.$$
In fact, since $2r+1$ must be a prime $3(\mbox{mod.} \ 4)$ we have that
$$\{2r+1\in \mathcal{P}(x+1;4,3)\}=\mathcal{T}'_2(x)\cup \{2r+1\in \mathcal{P}(x+1;4,3); \A(2r)=4\}.$$
By the Prime Number Theorem in Arithmetic Progression and above corollary
$$\frac{1}{2}=\lim_{x\to \infty}\frac{\pi(x;4,3)}{\pi(x)}=
\liminf_{x\to \infty}\frac{\mathcal{T}'_2(x)}{\pi(x)}\le \limsup_{x\to \infty}\frac{\mathcal{T}'_2(x)}{\pi(x)}=
\lim_{x\to \infty}\frac{\pi(x;4,3)}{\pi(x)}.$$
This finishes the claim and proves the theorem.
\end{proof}

\begin{rem}
It follows from the above Theorem and the remark \ref{remark} that
$$\vert\{m=2^kr\le x, r \ \mbox{odd and} \ \A(m)\ge 2\}\vert\gg \dfrac{\pi(x)}{2^k}$$ because $\pi(2x)\sim 2\pi(x)$ and $k$ is a fixed integer.
\end{rem}

\begin{cor}\label{corA}
$\vert\mathcal{T}_4(x)\vert=\oh(\vert\mathcal{T}_2(x) \vert).$
\end{cor}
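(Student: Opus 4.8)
The plan is to chain together the two results that immediately precede this corollary, namely the Corollary asserting $\lim_{x\to\infty}|\mathcal{T}_4(x)|/\sqrt{x}=0$ and Theorem \ref{thmA} asserting $|\mathcal{T}_2(x)|\sim\pi(x)/2$, with the Prime Number Theorem as the bridge between $\sqrt{x}$ and $\pi(x)$.

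First I would record that the earlier Corollary gives $|\mathcal{T}_4(x)|=\oh(\sqrt{x})$. Next I would invoke the Prime Number Theorem in the weak form $\pi(x)\sim x/\log x$, which yields $\sqrt{x}=\oh(\pi(x))$, and hence, using Theorem \ref{thmA}, $\sqrt{x}=\oh(|\mathcal{T}_2(x)|)$. Combining the two estimates,
\[
\frac{|\mathcal{T}_4(x)|}{|\mathcal{T}_2(x)|}=\frac{|\mathcal{T}_4(x)|}{\sqrt{x}}\cdot\frac{\sqrt{x}}{|\mathcal{T}_2(x)|}\longrightarrow 0\qquad(x\to\infty),
\]
which is precisely $|\mathcal{T}_4(x)|=\oh(|\mathcal{T}_2(x)|)$.

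There is essentially no obstacle here: the statement is a formal consequence of results already proved, and the only thing to be careful about is that Theorem \ref{thmA} is used to replace $|\mathcal{T}_2(x)|$ by the asymptotically equivalent quantity $\pi(x)/2$ without changing the little-$\oh$ relation. Still, including the short argument is worthwhile for completeness and to make explicit the contrast displayed numerically in the table: totients $2\bmod 4$ of multiplicity four are genuinely negligible compared with those of multiplicity two.
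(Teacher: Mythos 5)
Your argument is correct and is essentially the paper's own proof: both combine the earlier corollary $\vert\mathcal{T}_4(x)\vert=\oh(\sqrt{x})$ with Theorem \ref{thmA} via the Prime Number Theorem (the paper just writes $\vert\mathcal{T}_2(x)\vert\sim x/(2\log x)$ explicitly and bounds the ratio by $\Oh(\log x/\sqrt{x})$). No substantive difference.
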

\begin{proof}
Since $\vert\mathcal{T}_2(x)\vert \sim \dfrac{x}{2\log x}$ and $\vert\mathcal{T}_4(x)\vert=\oh(\sqrt{x})$, we get
$$\frac{\vert \mathcal{T}_4(x)\vert}{\vert\mathcal{T}_2(x)\vert}=\Oh\left(\frac{\sqrt{x}\log x}{x}\right)=\Oh\left(\frac{\log x}{\sqrt{x}}\right).$$
Therefore,
$$\lim_{x\to \infty}\frac{\vert\mathcal{T}_4(x)\vert}{\vert\mathcal{T}_2(x)\vert}=0.$$
\end{proof}

\noindent \textbf{Acknowledgment:} The authors warmly thank Paulo Ribenboim for his contagious enthusiasm, constant advices and encouragement.






\bibliographystyle{abbrv}
\bibliography{RefsCL}








\end{document}